\documentclass[11pt, twoside]{article}
\usepackage{latexsym}
\usepackage{amsmath}
\usepackage{amssymb}
\usepackage[all]{xy}
\usepackage{amsfonts}
\usepackage{verbatim}
\usepackage{amsthm}
\usepackage{mathrsfs}
\usepackage{epsfig}
\usepackage{array}
\usepackage{stmaryrd}
\usepackage{graphicx,color}
\usepackage{xcolor}
\usepackage[colorlinks=true,linkcolor=blue,citecolor=blue]{hyperref}
\usepackage[nameinlink,noabbrev]{cleveref}
\usepackage{tikz}
\usetikzlibrary{arrows,calc}
\usepackage{tikz-cd}
\usepackage{mathdots}
\usepackage{float}
\usepackage{graphics}
\usepackage{pdflscape}

\usepackage{enumitem}
\usepackage[perpage,symbol]{footmisc}
\usepackage{setspace}
\numberwithin{equation}{section}

\DeclareMathOperator{\Dec}{Dec}

\newcommand{\Ftwo}{\mathbb F_2}
\newcommand{\cC}{\mathcal C}
\newcommand{\cP}{\mathcal P}
\newcommand{\cR}{\mathcal R}
\newcommand{\cT}{\mathcal T}
\newcommand{\DeltaHat}{\widehat{\Delta}}
\newcommand{\SigmaHat}{\widehat{\Sigma}}

\usepackage{bm}
\begin{document}
\baselineskip=15pt
\title{\Large{\bf A right pretriangulated category which is not right triangulated\footnotetext{Jing He is supported by the National Natural Science Foundation of China (Grant No. 12401045). Panyue Zhou is supported by the National Natural Science Foundation of China (Grant No. 12371034).
}}}
\medskip
\author{Jing He and Panyue Zhou}

\date{}

\maketitle
\def\blue{\color{blue}}
\def\red{\color{red}}

\setlist[enumerate]{leftmargin=2.2em,itemsep=0.25em,topsep=0.4em}
\setlist[itemize]{leftmargin=2.0em,itemsep=0.25em,topsep=0.4em}

\newtheorem{theorem}{Theorem}[section]
\newtheorem{proposition}[theorem]{Proposition}
\newtheorem{lemma}[theorem]{Lemma}
\newtheorem{corollary}[theorem]{Corollary}
\newtheorem{definition}[theorem]{Definition}
\newtheorem{remark}[theorem]{Remark}

\crefname{theorem}{Theorem}{Theorems}
\crefname{proposition}{Proposition}{Propositions}
\crefname{lemma}{Lemma}{Lemmas}
\crefname{corollary}{Corollary}{Corollaries}
\crefname{definition}{Definition}{Definitions}
\crefname{remark}{Remark}{Remarks}

\baselineskip=17pt
\parindent=0.5cm
\vspace{-6mm}

\begin{abstract}
\baselineskip=16pt
Chen, Liu, Lu, and Zhang recently constructed a pretriangulated category with invertible suspension in which Verdier's octahedral axiom fails. We introduce a general enlargement construction for right pretriangulated categories and show that it preserves axioms (RTR1)--(RTR3), while failure of (RTR4) is detected by the forgetful functor. Applied to their type $A_5$ example, the construction yields a right pretriangulated category that is not right triangulated. In this example, the suspension is faithful but not essentially surjective.
\\[0.2cm]
\textbf{Keywords:} right pretriangulated category; right triangulated category; octahedral axiom; noninvertible suspension; preprojective algebra\\[0.1cm]
\textbf{Mathematics Subject Classification 2020:}  18G80; 18E10; 16G20
\end{abstract}

\pagestyle{myheadings}
\markboth{\rightline {\scriptsize   Jing He and Panyue Zhou}}
         {\leftline{\scriptsize A right pretriangulated category which is not right triangulated}}

\section{Introduction}

Right triangulated categories are one-sided analogues of triangulated categories in which the suspension is only required to be an additive endofunctor. They arise naturally in stable and quotient constructions where suspension is available in one direction but need not admit an inverse; see \cite{ABM98, BM94}. Their structure is governed by four axioms. The first three concern right triangles, rotation, and morphisms of right triangles, while the fourth is the octahedral axiom.

Throughout this paper, a triple satisfying (RTR1)--(RTR3) will be called a \emph{right pretriangulated category}, and one satisfying (RTR1)--(RTR4) a \emph{right triangulated category}. This terminology is used only in the one-sided setting and is unrelated to the notion of a pretriangulated dg category. The distinction is relevant here because the failure of the suspension to be invertible does not by itself force the failure of the right triangulated axioms; the essential issue is the octahedral axiom.

The independence of the octahedral axiom is known in the case of invertible suspension. Chen, Liu, Lu, and Zhang constructed a pretriangulated category satisfying Verdier's (TR1)--(TR3) but not (TR4) \cite[Theorem~1.2]{CLLZ26}. Their example is built from the category of finitely generated projective modules over the type $A_5$ preprojective algebra over $\mathbb F_2$, with suspension given by the autoequivalence induced by reflection of the Dynkin diagram.

The aim of this note is to turn their example into a genuinely one-sided one while retaining the failure of the octahedral axiom. More generally, let $(\cT,\Sigma,\Delta)$ be a right pretriangulated category and let
$$
F\colon\cT\longrightarrow\operatorname{fdVect}_k
$$
be an additive functor, where $\operatorname{fdVect}_k$ denotes the category of finite-dimensional $k$-vector spaces. We consider pairs $(X,U)$, where $X\in\cT$ and $U$ is a subspace of $F(X)$, with morphisms required to preserve the chosen subspaces. The suspension is defined by
$$
(X,U)\longmapsto(\Sigma X,0).
$$
If
$
X\xrightarrow{f}Y\xrightarrow{g}Z\xrightarrow{h}\Sigma X
$
is a right triangle and $F(f)U\subseteq V$, then the subspace attached to the third term is taken to be $F(g)V$. With this definition, the resulting category again satisfies (RTR1)--(RTR3).

The construction also preserves the failure of the octahedral axiom. Indeed, the zero-subspace embedding sends right triangles in the original category to right triangles in the enlarged category. Thus, if (RTR4) held in the enlarged category, forgetting the subspace data would give the corresponding octahedral completion in the original category. Equivalently, the decoration construction reflects the validity of (RTR4).

Applying this construction to the type $A_5$ example of Chen--Liu--Lu--Zhang gives the main result.

\begin{theorem}\label{thm:intro}
There exist an additive category $\cR$, an additive endofunctor $\SigmaHat\colon\cR\to\cR$, and a class of  right triangles satisfying \emph{(RTR1)}, \emph{(RTR2)}, and \emph{(RTR3)}, but not \emph{(RTR4)}. Moreover, $\SigmaHat$ is faithful and not essentially surjective. Hence $\cR$ is right pretriangulated but not right triangulated, and $\SigmaHat$ is not an autoequivalence.
\end{theorem}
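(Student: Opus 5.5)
The plan is to apply the enlargement construction described in the introduction to the Chen--Liu--Lu--Zhang category. Let $(\cT,\Sigma,\Delta)$ be the pretriangulated category of \cite[Theorem~1.2]{CLLZ26}: $\cT$ is the category of finitely generated projective modules over the type $A_5$ preprojective algebra over $\Ftwo$, and $\Sigma$ is induced by the diagram reflection. Regarded as a right pretriangulated category, it satisfies (RTR1)--(RTR3) but not (RTR4). For $F$ take the forgetful functor $F=\operatorname{Hom}_{\cT}(P,-)$ for a progenerator $P$, or simply the underlying $\Ftwo$-vector space functor, so that $F\colon\cT\to\operatorname{fdVect}_{\Ftwo}$ is additive and faithful. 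Then $\cR$ is the category of pairs $(X,U)$ with $U\subseteq F(X)$, and $\SigmaHat(X,U)=(\Sigma X,0)$. The class $\DeltaHat$ consists of the sequences $(X,U)\to(Y,V)\to(Z,F(g)V)\to(\Sigma X,0)$ coming from triangles in $\Delta$ with $F(f)U\subseteq V$, closed under isomorphism.

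First I would verify that $\cR$ is additive, with direct sums $(X\oplus X',U\oplus U')$, and that $\SigmaHat$ is an additive functor. The latter holds because every morphism maps $0$ into $0$.

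Next come the axioms (RTR1)--(RTR3).
\begin{itemize}
\item For (RTR1), the identity triangle $(X,U)\to(X,U)\to(0,0)\to(\Sigma X,0)$ comes from the identity triangle in $\cT$. Every morphism $f\colon(X,U)\to(Y,V)$ embeds in a triangle built from $X\to Y\to Z\to\Sigma X$ in $\Delta$ by attaching $F(g)V$ to $Z$.
\item For (RTR2), rotating gives $(Y,V)\to(Z,F(g)V)\to(\Sigma X,0)\to(\Sigma Y,0)$. This is again of the required form, because $F(g)V\subseteq F(g)V$ and $F(h)F(g)V=F(hg)V=0$. Hence the third subspace $F(h)F(g)V$ is $0$, as required.
\item For (RTR3), given a commutative square between the first two terms of two such triangles, I take the fill-in $c\colon Z\to Z'$ from $\cT$. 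I then check that $F(c)F(g)V=F(g')F(b)V\subseteq F(g')V'$, so $c$ is a morphism in $\cR$.
\end{itemize}

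The failure of (RTR4) uses the zero-subspace embedding $X\mapsto(X,0)$. This is fully faithful and sends triangles of $\Delta$ to triangles of $\DeltaHat$, since all attached subspaces are zero. Take an octahedral configuration in $\cT$ with no completion, and embed it in $\cR$. If (RTR4) held in $\cR$, the completing triangle and morphisms would project under $(X,U)\mapsto X$ to a triangle of $\Delta$, because $\DeltaHat$ is defined via $\Delta$ up to isomorphism. They would also project to morphisms making the required diagrams commute. This would contradict \cite[Theorem~1.2]{CLLZ26}. I expect the main obstacle to be here: one must make sure the object $Z'$ produced in $\cR$ and its maps genuinely map to an octahedral completion in $\cT$, and that the isomorphism-closure of $\DeltaHat$ does not introduce triangles whose underlying sequences leave $\Delta$. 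This works because isomorphisms in $\cR$ are isomorphisms in $\cT$ and $\Delta$ is isomorphism-closed.

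Finally, faithfulness and non-essential-surjectivity of $\SigmaHat$ follow directly from the construction.
\begin{itemize}
\item $\SigmaHat$ is faithful because $\Sigma$ is an autoequivalence and morphisms in $\cR$ are morphisms in $\cT$.
\item $\SigmaHat$ is not essentially surjective because any nonzero projective $X$ has $F(X)\neq0$. Then $(X,F(X))$ cannot be isomorphic to some $(\Sigma Y,0)$, since an isomorphism must carry $F(X)$ bijectively onto $0$. Hence $\SigmaHat$ is not an autoequivalence.
\end{itemize}
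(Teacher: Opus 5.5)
Your proposal follows the paper's route essentially step for step. It uses the same decorated category $\Dec_F(\cP(\Lambda))$, the same checks of (RTR2) and (RTR3), reflection of (RTR4) through the zero-decoration embedding and the forgetful functor, and faithfulness from $Q\SigmaHat=\Sigma Q$. Your witness $(X,F(X))$ outside the essential image works as well as the paper's $(P_1,\Ftwo e_1)$. There is, however, one concrete gap, in (RTR1).

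In this paper, following Assem--Beligiannis--Marmaridis, the trivial right triangle demanded by (RTR1) is $0\to A\xrightarrow{1_A}A\to 0$. It is not the Verdier-style $A\xrightarrow{1_A}A\to 0\to\Sigma A$ that you verify. With only forward rotation these two are not interchangeable. Rotating your triangle $(X,U)\xrightarrow{1}(X,U)\to(0,0)\to(\Sigma X,0)$ twice yields the required triangle only for $(\Sigma X,0)$, that is, only for zero-decorated objects in the image of $\SigmaHat$. So for an object such as $(P_1,\Ftwo e_1)$ your argument says nothing, and this is exactly where the one-sided setting matters.

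The fix is to decorate the underlying sequence $0\to X\xrightarrow{1_X}X\to 0$ by $0$ and $U$; the prescribed third decoration is then $F(1_X)U=U$. This requires $0\to X\xrightarrow{1_X}X\to 0$ to lie in $\Delta_\varepsilon$, which is not literally part of Verdier's (TR1). It is precisely the content of \cref{prop:seed}, which you replaced by the bare assertion that the CLLZ example, ``regarded as a right pretriangulated category'', satisfies (RTR1)--(RTR3). To prove it:
\begin{enumerate}
\item choose $X'$ with $\Sigma X'\cong X$, which is possible because $(-)_\nu$ is an autoequivalence;
\item rotate $X'\xrightarrow{1}X'\to 0\to\Sigma X'$ twice to obtain $0\to\Sigma X'\xrightarrow{-1}\Sigma X'\to 0$;
\item use closure under isomorphism.
\end{enumerate}
With this added, the rest of your argument goes through as written.
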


The category of decorated objects is closely related to the classical subspace categories arising in representation theory; see, for example, \cite{DRSS99}. The novelty here lies not in the introduction of subspace data itself, but in showing that this construction is compatible with (RTR1)--(RTR3) and preserves the obstruction to the octahedral axiom when passing to a setting with noninvertible suspension.

This article is organised as follows. In Section 2, we recall the right triangulated axioms in the form used in this paper. In Section 3, we describe the type $A_5$ example of Chen--Liu--Lu--Zhang. In Section 4, we establish the general transfer construction. In the final section, we apply this construction to obtain the one-sided example with noninvertible suspension.

\section{Right pretriangulated and right triangulated categories}\label{sec:definitions}

In this section, we recall the basic terminology and axioms for right triangulated categories. We use the right-handed axioms as stated by Assem--Beligiannis--Marmaridis \cite[Definition~1.1]{ABM98}, which are dual to the left triangulated axioms of Beligiannis--Marmaridis \cite[Definition~2.2]{BM94}.

Let $\cC$ be an additive category and let $\Sigma\colon\cC\to\cC$ be an additive endofunctor. A \emph{sextuple} in $\cC$ is a sequence
$$
\xymatrix{
A\ar[r]^f&B\ar[r]^g&C\ar[r]^{h~}&\Sigma A
}
$$
with $A,B,C\in\cC$. A morphism of sextuples is a triple $(\alpha,\beta,\gamma)$ making the diagram
$$
\xymatrix{
A\ar[r]^f\ar[d]_{\alpha}&
B\ar[r]^g\ar[d]_{\beta}&
C\ar[r]^h\ar[d]_{\gamma}&
\Sigma A\ar[d]^{\Sigma\alpha}\\
A'\ar[r]^{f'}&
B'\ar[r]^{g'}&
C'\ar[r]^{h'}&
\Sigma A'
}
$$
commutative. It is an \emph{isomorphism of sextuples} if $\alpha$, $\beta$ and $\gamma$ are isomorphisms.

Let $\nabla$ be a class of sextuples in $\cC$. We say that $(\cC,\Sigma,\nabla)$ satisfies (RTR1)--(RTR3) if the following conditions hold.
\begin{enumerate}[label=\textup{(RTR\arabic*)},leftmargin=3.7em]

\item\label{item:RTR1}
The class $\nabla$ is closed under isomorphisms of sextuples. For every object $A\in\cC$,
$$
\xymatrix{
0\ar[r]&A\ar[r]^{1_A}&A\ar[r]&0
}
$$
belongs to $\nabla$, and every morphism $f\colon A\to B$ can be completed to a sextuple
$$
\xymatrix{
A\ar[r]^f&B\ar[r]^g&C\ar[r]^{h~}&\Sigma A
}
$$
in $\nabla$.

\item\label{item:RTR2}
If
$$
\xymatrix{
A\ar[r]^f&B\ar[r]^g&C\ar[r]^h&\Sigma A
}
$$
belongs to $\nabla$, then so does its forward rotation

$$
\xymatrix{
B\ar[r]^g&C\ar[r]^h&\Sigma A\ar[r]^{-\Sigma f}&\Sigma B.
}
$$

\item\label{item:RTR3}
Given two sextuples in $\nabla$ and morphisms $\alpha\colon A\to A'$ and $\beta\colon B\to B'$ satisfying $\beta f=f'\alpha$, there exists $\gamma\colon C\to C'$ such that

$$
\xymatrix{
A\ar[r]^f\ar[d]_{\alpha}&
B\ar[r]^g\ar[d]_{\beta}&
C\ar[r]^h\ar@{-->}[d]^{\gamma}&
\Sigma A\ar[d]^{\Sigma\alpha}\\
A'\ar[r]^{f'}&
B'\ar[r]^{g'}&
C'\ar[r]^{h'}&
\Sigma A'
}
$$

commutes.

\end{enumerate}

A triple $(\cC,\Sigma,\nabla)$ satisfying (RTR1)--(RTR3) is called a \emph{right pretriangulated category}, and the sextuples in $\nabla$ are called \emph{right triangles}. We shall write a right triangle simply as
$$
\xymatrix{
A\ar[r]^f&B\ar[r]^g&C\ar[r]^h&\Sigma A.
}
$$
A right pretriangulated category is \emph{right triangulated} if it also satisfies the following right octahedral axiom (see (RTR4)).
\begin{enumerate}[label=\textup{(RTR4)},leftmargin=3.5em]

\item\label{item:RTR4}{\bf (right octahedral axiom)} Suppose that
$$
\xymatrix{
A\ar[r]^f&B\ar[r]^g&C\ar[r]^h&\Sigma A,
}
~
\xymatrix{
A\ar[r]^{af}&X\ar[r]^d&Y\ar[r]^e&\Sigma A,
}
$$
and
$$
\xymatrix{
B\ar[r]^a&X\ar[r]^b&Z\ar[r]^c&\Sigma B
}
$$
are right triangles. Then there exist morphisms
$
s\colon C\to Y
$
and
$
t\colon Y\to Z
$
such that the diagram

$$
\xymatrix{
A\ar[r]^f\ar@{=}[d]&
B\ar[r]^g\ar[d]^a&
C\ar[r]^h\ar[d]^s&
\Sigma A\ar@{=}[d]\\
A\ar[r]^{af}&
X\ar[r]^d\ar[d]^b&
Y\ar[r]^e\ar[d]^t&
\Sigma A\ar[d]^{\Sigma f}\\
&
Z\ar@{=}[r]\ar[d]^c&
Z\ar[r]^c\ar[d]^{(\Sigma g)c}&
\Sigma B\\
&
\Sigma B\ar[r]^{\Sigma g}&
\Sigma C&
}
$$
commutes, and
$$
\xymatrix{
C\ar[r]^s&Y\ar[r]^t&Z\ar[r]^{(\Sigma g)c}&\Sigma C}
$$
is a right triangle.
\end{enumerate}
The endofunctor $\Sigma$ is called the \emph{suspension functor}.

\begin{remark}\label{rem1}
The main difference between a right triangulated category and a triangulated category lies in the suspension functor. In a right triangulated category $(\cC,\Sigma,\nabla)$, the functor $\Sigma$ is not required to be an autoequivalence. In fact, $(\cC,\Sigma,\nabla)$ is triangulated if and only if $\Sigma$ is an autoequivalence.
\end{remark}

We will use the following standard consequence of the first three axioms.

\begin{lemma}\label{lem:zero-compositions}
Let $(\cC,\Sigma,\Delta)$ be a right pretriangulated category. If
$$
\xymatrix{
A\ar[r]^f&B\ar[r]^g&C\ar[r]^h&\Sigma A
}
$$
is a right triangle, then
$gf=0$ and $hg=0$.
\end{lemma}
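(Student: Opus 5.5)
We prove the two vanishing statements separately. In each case we compare the given right triangle with a trivial right triangle supplied by (RTR1), use (RTR3) to produce a comparison morphism, and read off the vanishing from commutativity of the resulting diagram.

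\emph{Step 1: $gf=0$.} By (RTR1), the sextuple
$$
\xymatrix{
0\ar[r]&A\ar[r]^{1_A}&A\ar[r]&0
}
$$
is a right triangle. Here $\Sigma 0=0$, since $\Sigma$ is additive. Rotate the given triangle once using (RTR2). This gives the right triangle
$$
\xymatrix{
B\ar[r]^g&C\ar[r]^h&\Sigma A\ar[r]^{-\Sigma f}&\Sigma B.
}
$$
This rotation alone does not pass $f$ into a first position where the trivial triangle can be compared with it. So instead we compare the trivial triangle on $A$ directly with the original triangle, using the vertical maps $\alpha=0\colon 0\to A$ and $\beta=f\colon A\to B$. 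The left square commutes trivially, because $f\circ 1_A$ and $f'\circ 0$ both start at $0$. To see this, write the trivial triangle as $0\to A\xrightarrow{1_A}A\to 0$ and the target as $A\xrightarrow{f}B\xrightarrow{g}C\xrightarrow{h}\Sigma A$. The first vertical map $0\to A$ is zero, and the second vertical map is $f\colon A\to B$. The required square is then $f\circ 0 = f\circ 0$, which is automatic.

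By (RTR3) there is $\gamma\colon A\to C$ making the diagram commute. The middle square gives $\gamma\circ 1_A=g f$, so $\gamma=gf$. The right square gives $h\gamma=\Sigma(0)\circ 0=0$. This only yields $hgf=0$, which is not enough, so this comparison needs adjusting.

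\emph{Better choice for Step 1.} Compare the trivial triangle on $A$ with the rotated triangle $A\to\ldots$ from the second rotation instead. Rotating the original triangle once gives $B\xrightarrow{g}C\xrightarrow{h}\Sigma A\xrightarrow{-\Sigma f}\Sigma B$. Rotating again gives
$$
\xymatrix{
C\ar[r]^h&\Sigma A\ar[r]^{-\Sigma f}&\Sigma B\ar[r]^{-\Sigma g}&\Sigma C.
}
$$
The cleanest route is to apply (RTR3) to the trivial triangle on $B$, namely $0\to B\xrightarrow{1_B}B\to 0$, and the once-rotated triangle $B\xrightarrow{g}C\xrightarrow{h}\Sigma A$. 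We use vertical maps $\alpha=0\colon 0\to B$ and $\beta=g\colon B\to C$, which are compatible since $g\circ 0 = g\circ 0$. This yields $\gamma\colon B\to\Sigma A$ with $\gamma=hg$ from the middle square, and $0=(-\Sigma f)\circ 0$ from the right square. That again does not force $hg=0$.

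\emph{Use the identity triangle as the target instead.} This is the key step. Map the given triangle into the trivial triangle $0\to B\xrightarrow{1_B}B\to 0$, using $\alpha=0\colon A\to 0$ and $\beta=1_B$. The left square requires $1_B\circ f=1_B\circ 0$, so this fails unless $f=0$. Instead, take the trivial triangle $0\to C\xrightarrow{1}C\to 0$ as the target, and the once-rotated triangle $B\xrightarrow{g}C\xrightarrow{h}\Sigma A\xrightarrow{-\Sigma f}\Sigma B$ as the source. Use $\alpha=0\colon B\to 0$ and $\beta=1_C$. The left square needs $1_C\circ g=1_C\circ 0$, which is again circular.

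The correct comparison is to take the source to be the trivial triangle $0\to B\xrightarrow{1}B\to 0$ and the target to be the given triangle rotated so that it begins with $B$, that is, $B\xrightarrow{g}C\xrightarrow{h}\Sigma A\xrightarrow{-\Sigma f}\Sigma B$. Use $\alpha=1_B$ on the first term: the source's first term is $0$, so instead we shift by one position and use the trivial triangle as $B\xrightarrow{1}B\to 0\to\Sigma B$. This is the twice-rotated form of $0\to B\to B\to 0$, possibly up to sign, and it lies in $\nabla$ by (RTR2) together with closure under isomorphism.

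Apply (RTR3) to
$$
\xymatrix{
B\ar[r]^{1}\ar[d]_{1_B}&B\ar[r]\ar[d]_{g}&0\ar[r]\ar@{-->}[d]&\Sigma B\ar[d]\\
B\ar[r]^{g}&C\ar[r]^{h}&\Sigma A\ar[r]&\Sigma B,
}
$$
whose left square commutes. We obtain $\gamma\colon 0\to\Sigma A$ with $\gamma\circ 0=hg$, so $hg=0$.

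For $gf=0$, apply the same argument to the trivial triangle $A\xrightarrow{1}A\to 0\to\Sigma A$ and the given triangle, with vertical maps $1_A$ and $f$. This gives $gf=\gamma\circ 0=0$.

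\emph{Main obstacle.} The only delicate point is justifying that $B\xrightarrow{1}B\to 0\to\Sigma B$ lies in $\nabla$. We obtain it by rotating $0\to B\xrightarrow{1}B\to 0$ once with (RTR2), which gives $B\xrightarrow{1}B\to 0\xrightarrow{-\Sigma 0=0}\Sigma 0=0$. That ends in $\Sigma 0$, not $\Sigma B$, so this rotation does not produce the required triangle. We therefore obtain it instead by completing $1_B$ via (RTR1) to some right triangle $B\xrightarrow{1}B\to C'\to\Sigma B$. Applying the comparison to this triangle then gives $hg=\gamma\circ u$, where $u\colon B\to C'$ is the second map. So we must show $u=0$, or equivalently that $C'=0$ up to isomorphism. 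I expect this to be the hardest step. I would try comparing two completions of $1_B$ via (RTR3) in both directions, and use the rotated identity triangle to show that $u$ factors through $1_B-1_B=0$.
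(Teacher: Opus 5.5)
Your final comparison is exactly the paper's argument: map $A\xrightarrow{1_A}A\to 0\to\Sigma A$ into the given triangle via the vertical maps $1_A$ and $f$, so the middle square forces $gf=\gamma\circ 0=0$. The same comparison applied to the once-rotated triangle gives $hg=0$. As written, however, the proposal does not prove the lemma. In your last paragraph you withdraw the justification that $B\xrightarrow{1_B}B\to 0\to\Sigma B$ is a right triangle, and you leave this as an open ``hardest step''.

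That withdrawal comes from a miscomputed rotation. By (RTR2), the rotation of $X\xrightarrow{u}Y\xrightarrow{v}Z\xrightarrow{w}\Sigma X$ is $Y\xrightarrow{v}Z\xrightarrow{w}\Sigma X\xrightarrow{-\Sigma u}\Sigma Y$. Its last term is $\Sigma Y$, not $\Sigma X$. Apply this to the (RTR1) triangle $0\to B\xrightarrow{1_B}B\to\Sigma 0$, so $X=0$ and $Y=Z=B$. One rotation gives $B\xrightarrow{1_B}B\to\Sigma 0\to\Sigma B$. Since $\Sigma$ is additive, $\Sigma 0$ is a zero object. Closure under isomorphisms of sextuples, with vertical maps $1_B$, $1_B$ and $\Sigma 0\cong 0$, then puts $B\xrightarrow{1_B}B\to 0\to\Sigma B$ in the class of right triangles. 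This is the once-rotated trivial triangle, not the twice-rotated one as you first wrote. It is precisely the paper's step ``by (RTR1) and one application of (RTR2)''.

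Your workaround is unnecessary, and as sketched it is not an argument. It completes $1_B$ to some $B\xrightarrow{1_B}B\xrightarrow{u}C'\to\Sigma B$ and then tries to prove $u=0$. You do not explain how comparing two completions of $1_B$ would force $u=0$. The natural way to force it is to compare with a completion whose third term is already $0$, which is the rotated trivial triangle you discarded.

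Once the rotation is corrected, delete the exploratory dead ends: the comparison yielding only $hgf=0$, and the squares that would require $f=0$ or $g=0$. What remains is complete and coincides with the paper's proof.
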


\begin{proof}
By (RTR2), it suffices to verify that $gf=0$. By (RTR1) and one application of (RTR2), $A\xrightarrow{1_A}A\longrightarrow0\longrightarrow\Sigma A$ is a right triangle. Applying (RTR3) to the commutative solid part of the following diagram, we obtain a morphism completing it to a commutative diagram:
$$\xymatrix{
A \ar[r]^1 \ar@{=}[d] & A \ar[r] \ar[d]^f & 0 \ar[r] \ar@{-->}[d] & \Sigma A \ar@{=}[d]\\
A \ar[r]^{f} & B \ar[r]^{g} & C \ar[r]^{h\hspace{1mm}} & \Sigma A}
$$
It follows that $gf=0$.
\end{proof}

\section{A seed from type $A_5$}\label{sec:seed}

Let $k=\Ftwo$. We first fix the quiver and the convention for composition of paths. Let $\overline{A_5}$ be the doubled quiver

$$
\begin{tikzcd}[column sep=3.4em]
1 \arrow[bend left=18]{r}{b_1}
& 2 \arrow[bend left=18]{l}{a_1} \arrow[bend left=18]{r}{b_2}
& 3 \arrow[bend left=18]{l}{a_2} \arrow[bend left=18]{r}{b_3}
& 4 \arrow[bend left=18]{l}{a_3} \arrow[bend left=18]{r}{b_4}
& 5 \arrow[bend left=18]{l}{a_4}.
\end{tikzcd}
$$
Thus $b_i\colon i\to i+1$ and $a_i\colon i+1\to i$ for $1\leq i\leq4$. Paths are composed from right to left. Following \cite[p.~158]{GelfandPonomarev1979} and \cite[Section~3.1]{CLLZ26}, set
$$
   \Lambda=\Pi_k(A_5)=k\overline{A_5}/I,
$$
where $I$ is generated by
$
   a_1\circ b_1,~~
   b_i\circ a_i+a_{i+1}\circ b_{i+1}~(1\leq i\leq3),~~
   b_4\circ a_4.
$
Thus the length-two backtracking paths at the boundary vertices vanish, while at each internal vertex the two backtracking paths are identified. Since $k=\Ftwo$, the usual difference appearing in the preprojective relation agrees with the sum displayed above.

Let $e_1,\ldots,e_5$ denote the images of the stationary paths. These are pairwise orthogonal primitive idempotents and
$$
   1_\Lambda=e_1+\cdots+e_5.
$$
The algebra $\Lambda$ is $35$-dimensional and self-injective \cite[Proposition~3.1]{CLLZ26}. We write $\operatorname{mod}!\text{-}\Lambda$ for the category of finite-dimensional right $\Lambda$-modules and $\cP(\Lambda)$ for its full subcategory of finitely generated projective modules. For $1\leq i\leq5$, put
$
   P_i=e_i\Lambda.
$
Then $P_1,\ldots,P_5$ are the indecomposable projective right $\Lambda$-modules, and
$$
   \Lambda_\Lambda\cong\bigoplus_{i=1}^5P_i.
$$
Reflection of the Dynkin diagram induces an involutive algebra automorphism
$$
   \nu\colon\Lambda\longrightarrow\Lambda
$$
determined by
$$
   \nu(e_i)=e_{6-i},
  ~~
   \nu(a_i)=b_{5-i},
   ~~
   \nu(b_i)=a_{5-i}.
$$
For a right $\Lambda$-module $P$, let $P_\nu$ denote the vector space $P$ equipped with the twisted right action
$$
   x\mathbin{\cdot_\nu}\lambda=x\nu(\lambda).
$$
Twisting by $\nu$ yields an autoequivalence
$$
   \Sigma=(-)_\nu\colon\cP(\Lambda)\longrightarrow\cP(\Lambda).
$$
We now recall the result from \cite{CLLZ26} that provides the starting point for our construction.

\begin{theorem}{\rm \cite[Theorem~1.2]{CLLZ26}}\label{thm:CLLZ}
There is a class $\Delta_\varepsilon$ of distinguished candidate triangles on $(\cP(\Lambda),\Sigma)$ satisfying Verdier's axioms \emph{(TR1)--(TR3)} but not \emph{(TR4)}.
\end{theorem}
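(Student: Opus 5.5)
The statement is \cite[Theorem~1.2]{CLLZ26}, so within this paper it is simply quoted. If one had to prove it from scratch, I would follow the Amiot--Heller approach to triangulated structures on categories of projectives over self-injective algebras.

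\textbf{Step 1: a three-periodic projective resolution.} Since $\Lambda$ is self-injective and is the preprojective algebra of Dynkin type $A_5$, its minimal projective bimodule resolution is periodic. By Brenner--Butler--King, the third syzygy of $\Lambda$ as a bimodule is isomorphic to $\Lambda$ twisted on one side by $\nu$. So I would fix an exact sequence of $\Lambda$-bimodules
$$
0\to {}_1\Lambda_\nu\to P_2\to P_1\to P_0\to\Lambda\to 0
$$
with projective middle terms. The extension class of this sequence in $\operatorname{Ext}^3_{\Lambda^e}(\Lambda,{}_1\Lambda_\nu)$ is a unit; call it $\varepsilon$.

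\textbf{Step 2: define $\Delta_\varepsilon$ and check (TR1)--(TR3).} Declare $X\xrightarrow{f}Y\xrightarrow{g}Z\xrightarrow{h}X_\nu$ to be distinguished when two conditions hold:
\begin{itemize}
\item the unrolled sequence $\cdots\to X\to Y\to Z\to X_\nu\to Y_\nu\to\cdots$ is exact in $\operatorname{mod}\text{-}\Lambda$;
\item the induced $4$-term exact sequence $0\to\operatorname{Ker}f\to X\to\cdots\to(\operatorname{Ker}f)_\nu\to0$ represents $\varepsilon$ tensored with the module $\operatorname{Ker} f$.
\end{itemize}
Given this, (TR1)--(TR3) are Amiot's theorem. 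Completion of a morphism $f$ comes from splicing a projective cover of $\operatorname{Coker}f$ with the chosen representative of $\varepsilon$. Rotation uses the sign convention together with periodicity. Morphism completion comes from comparison maps between projective resolutions, using that projectives are injective. I would cite or reprove these steps formally.

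\textbf{Step 3: failure of (TR4), the main obstacle.} Here there is no general theorem to invoke, since Amiot's octahedral argument needs extra hypotheses that fail over $\Ftwo$ for this algebra. My plan is to exhibit explicit composable projective maps $A\xrightarrow{f}B\xrightarrow{a}X$, for instance among $P_1,P_3,P_5$ and their $\nu$-twists. I would then compute cones for $f$, $af$ and $a$ using the explicit $35$-dimensional basis of paths. Finally, I would show that no pair $s,t$ makes the octahedral diagram commute and $C\to Y\to Z\to\Sigma C$ distinguished. Since all Hom-spaces are finite-dimensional over $\Ftwo$, this reduces to a finite linear-algebra check: parametrise the candidates $s$ satisfying the commutativity constraints, and show that every resulting triangle has the wrong class relative to $\varepsilon$. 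I expect this obstruction computation, and above all the choice of the configuration, to be the hardest part.
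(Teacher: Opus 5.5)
You are right that the paper gives no proof: the statement is quoted from \cite{CLLZ26}, and only its consequence, Proposition~\ref{prop:seed}, is used later. At that level your proposal agrees with the paper. Your self-contained sketch, however, does not prove the theorem. Steps~1--2 are the Heller--Amiot construction and give (TR1)--(TR3) for every admissible class $\varepsilon$. The content of the statement is the failure of (TR4), and Step~3 supplies no configuration, no obstruction and no computation.

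The more serious problem is that Step~3 cannot succeed as formulated. It uses nothing about $\varepsilon$ beyond its being the class of some sequence $0\to{}_1\Lambda_\nu\to P_2\to P_1\to P_0\to\Lambda\to0$. But $\cP(\Lambda)$ should already carry a genuine triangulated structure with suspension isomorphic to $(-)_\nu$, namely Keller's orbit category $\mathcal{D}^b(\operatorname{mod}\Ftwo A_5)/\tau$. That category has five indecomposables. Its category of indecomposables is the mesh category of $\mathbb{Z}A_5/\tau$, which is $\Lambda$. Its shift $[1]$ permutes the five $\tau$-orbits by $i\mapsto 6-i$. This structure is algebraic, and in the Heller--Amiot description it should be $\Delta_{\varepsilon_0}$ for a suitable admissible $\varepsilon_0$. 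For that choice your finite linear-algebra check would find octahedral completions, so ``every resulting triangle has the wrong class relative to $\varepsilon$'' cannot hold for an arbitrary $\varepsilon$ fixed in Step~1.

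What is missing is a specific nonstandard choice of $\varepsilon$, together with an invariant that detects it. This is what \cite{CLLZ26} supplies; the paper alludes to its class $\Delta_\varepsilon$ and its fixed-boundary obstruction without recalling them. I also recall, though I am less certain of this, that work of Muro and of Jasso--Muro on enhancements of finite triangulated categories over perfect fields implies the following: every pretriangulation induced by a bimodule isomorphism $\Omega^3_{\Lambda^e}\Lambda\cong{}_1\Lambda_\nu$ is algebraic, hence octahedral. If that is right, no choice in your Step~1 works. The example would then need a Heller natural isomorphism $\Omega^3\cong(-)_\nu$ on $\underline{\operatorname{mod}}\,\Lambda$ that is not induced by a bimodule map, and your definition of $\Delta_\varepsilon$ excludes such isomorphisms.
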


We next translate this statement into the language of right triangulated categories.

\begin{proposition}\label{prop:seed}
The triple
$$
   \bigl(\cP(\Lambda),\Sigma,\Delta_\varepsilon\bigr)
$$
is right pretriangulated and does not satisfy \emph{(RTR4)}.
\end{proposition}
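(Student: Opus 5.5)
We need to translate Verdier's axioms (TR1)--(TR3) into (RTR1)--(RTR3), and the failure of (TR4) into the failure of (RTR4). Throughout, $\Sigma=(-)_\nu$ is an automorphism of $\cP(\Lambda)$ (indeed $\Sigma^2=1$, since $\nu$ is an involution), so no one-sidedness issues arise.

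\textbf{Axioms (RTR1)--(RTR3).}
\begin{itemize}
\item (RTR1) is contained in (TR1). Verdier's (TR1) gives closure under isomorphism, the trivial triangle $A\xrightarrow{1}A\to0\to\Sigma A$, and completion of every morphism. The sextuple $0\to A\xrightarrow{1_A}A\to 0$ is a distinguished triangle in any pretriangulated category: rotate the trivial triangle, or use the rotation equivalence of (TR2).
\item (RTR2) is the forward half of Verdier's (TR2), with the same sign convention $-\Sigma f$.
\item (RTR3) is literally (TR3).
\end{itemize}
One point needs checking: the sign convention and the form of (TR1)--(TR3) used in \cite{CLLZ26} must match those recalled in Section~\ref{sec:definitions}. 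If \cite{CLLZ26} puts the sign on a different map, we replace $\Delta_\varepsilon$ by an isomorphic class of triangles, changing signs via the isomorphism $(1,1,-1)$ or similar. This does not affect any of the axioms.

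\textbf{Failure of (RTR4): the main step.} Here the issue is that Verdier's (TR4) and the right octahedral axiom (RTR4) as stated are formulated differently. The plan is to show that, for a triple satisfying (TR1)--(TR3) with invertible $\Sigma$, (RTR4) implies (TR4). Then Theorem~\ref{thm:CLLZ} gives the failure of (RTR4).

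Take Verdier's data: triangles on $f$, on $a$, and on $af$. This is exactly the hypothesis of (RTR4), which yields $s,t$ with the displayed commutativities:
\[
sg=da,\quad es=h,\quad tb\text{-relations } td=b,\quad (\Sigma f)e=ct,
\]
together with the fact that $C\xrightarrow{s}Y\xrightarrow{t}Z\xrightarrow{(\Sigma g)c}\Sigma C$ is distinguished. These are precisely the conclusions of (TR4) in its standard formulation: an octahedron whose fourth triangle has third map $(\Sigma g)\circ c$, with the commutativity conditions as listed.

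So the argument is a direct matching of diagrams, possibly up to the sign conventions of \cite{CLLZ26}. The main obstacle is to verify that the version of (TR4) refuted in \cite{CLLZ26} is equivalent to this diagram, including signs. If their version is stated differently, for example via the "$\exists$ octahedron" form, or with only some of the commutativities required, then we need to ensure that the (RTR4) conclusion supplies at least what their (TR4) requires. If it supplies more, the argument is immediate. If instead their failure statement is for a weaker variant, we would rely on the standard equivalence of the octahedral variants under (TR1)--(TR3) with $\Sigma$ invertible.

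By Remark~\ref{rem1}, if (RTR4) held then, $\Sigma$ being an autoequivalence, the triple would be triangulated. It would then satisfy (TR4), contradicting Theorem~\ref{thm:CLLZ}. This remark is the cleanest route to invoke, with the matching of (TR4) variants as its justification.
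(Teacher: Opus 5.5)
Your proposal is correct and follows the paper's proof: (RTR1)--(RTR3) are read off from (TR1)--(TR3), and the failure of (RTR4) comes from Remark~\ref{rem1} together with \cref{thm:CLLZ}. Your observation that the (RTR4) diagram is literally the standard octahedron is a slightly sharper justification of that last step. Two small points. First, two forward rotations of the trivial triangle of $A$ give $0\to\Sigma A\to\Sigma A\to 0$, not the sequence for $A$; as the paper does, start instead from an $X$ with $\Sigma X\cong A$ (here $X=\Sigma A$ works, since $\Sigma^2=1$), or use the backward direction of (TR2). Second, your sign worries are vacuous, because $-1=1$ in the $\Ftwo$-linear category $\cP(\Lambda)$.
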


\begin{proof}
The closure under isomorphisms and the completion of every morphism to a candidate triangle follow directly from Verdier's (TR1). It remains only to verify the trivial sequence required in (RTR1).

Let $A\in\cP(\Lambda)$. Since $\Sigma$ is an autoequivalence, there is an object $X$ with $\Sigma X\cong A$. By Verdier's (TR1),
$$
   X\xrightarrow{1_X}X\longrightarrow0\longrightarrow\Sigma X
$$
is a distinguished candidate triangle. Applying the rotation axiom twice gives
$$
   0\longrightarrow\Sigma X\xrightarrow{-1_{\Sigma X}}\Sigma X\longrightarrow0.
$$
This triangle is isomorphic to
$$
   0\longrightarrow A\xrightarrow{1_A}A\longrightarrow0.
$$
Hence (RTR1) holds. Verdier's rotation and morphism axioms give (RTR2) and (RTR3), respectively.

Assume, to the contrary, that (RTR4) also holds. Since $\Sigma$ is an autoequivalence, the resulting right triangulated category is a triangulated category (see Remark \ref{rem1}). In particular, $\Delta_\varepsilon$ would satisfy Verdier's octahedral axiom, contrary to \cref{thm:CLLZ}. Thus (RTR4) fails.
\end{proof}

We shall not recall the construction of $\Delta_\varepsilon$ or the fixed-boundary obstruction from \cite{CLLZ26}. In what follows, \cref{prop:seed} will be used only as a non-octahedral right pretriangulated seed.

\section{The decoration construction}\label{sec:decoration}

Throughout this section, $k$ denotes a field and $(\cT,\Sigma,\Delta)$ is a right pretriangulated category.
Let
$$
F\colon\cT\longrightarrow\operatorname{fdVect}_k
$$
be an additive functor, where $\operatorname{fdVect}_k$ denotes the category of finite-dimensional $k$-vector spaces.

\begin{definition}\label{def:decorated-category}
The category $\Dec_F(\cT)$ is defined as follows.
\begin{enumerate}[label=\textup{(\roman*)},leftmargin=2.8em]
\item The objects are pairs $(X,U)$, where $X\in\cT$ and $U$ is a vector subspace of $F(X)$.
\item A morphism
$$
f\colon(X,U)\longrightarrow(Y,V)
$$
is a morphism $f\colon X\to Y$ in $\cT$ such that $F(f)(U)\subseteq V$.
\end{enumerate}
\end{definition}
The definition is compatible with composition: if $f$ and $g$ preserve the relevant subspaces, then so does $gf$.  Moreover, $\Dec_F(\cT)$ is additive.  Its morphisms form additive subgroups of the corresponding Hom groups in $\cT$, its zero object is $(0,0)$, and, under the canonical identification $F(X\oplus Y)\cong F(X)\oplus F(Y)$, its biproducts are
$$
   (X,U)\oplus(Y,V)=(X\oplus Y,U\oplus V).
$$
Two elementary functors will be used repeatedly.  The first is the forgetful functor
\[
   Q\colon\Dec_F(\cT)\longrightarrow\cT,
  ~~Q(X,U)=X,
\]
and the second is the zero-decoration embedding
\[
   J\colon\cT\longrightarrow\Dec_F(\cT),
   ~~ J(X)=(X,0).
\]
The functor $J$ is fully faithful, and $QJ=1_{\cT}$.

Define an additive endofunctor
\[
   \SigmaHat(X,U)=(\Sigma X,0),
  ~~
   \SigmaHat(f)=\Sigma f.
\]
Then
\begin{equation}\label{eq:compatibility}
   Q\SigmaHat=\Sigma Q,
   ~~
   \SigmaHat J=J\Sigma.
\end{equation}

We now define the  right triangles.  Let
\begin{equation}\label{eq:underlying-triangle}
   X\xrightarrow{f}Y\xrightarrow{g}Z\xrightarrow{h}\Sigma X
\end{equation}
be right triangle in $\cT$, and choose subspaces
\[
   U\subseteq F(X),
  ~~
   V\subseteq F(Y)
\]
with $F(f)U\subseteq V$.  We associate to this data the sequence
\begin{equation}\label{eq:decorated-triangle}
   (X,U)\xrightarrow{f}(Y,V)
   \xrightarrow{g}(Z,F(g)V)
   \xrightarrow{h}(\Sigma X,0).
\end{equation}
The last arrow is a morphism in $\Dec_F(\cT)$ because \cref{lem:zero-compositions} gives $hg=0$, and hence
\[
   F(h)F(g)V=0.
\]

\begin{definition}\label{def:DeltaHat}
Let $\DeltaHat$ be the isomorphism closure of all sequences of the form \eqref{eq:decorated-triangle}.  Its members are called right triangles.
\end{definition}

We can now state the transfer result.  Its second assertion is the mechanism that carries the octahedral obstruction from the original category to the decorated one.

\begin{proposition}\label{prop:transfer}
If $(\cT,\Sigma,\Delta)$ is right pretriangulated, then
\[
   \bigl(\Dec_F(\cT),\SigmaHat,\DeltaHat\bigr)
\]
is right pretriangulated.  Moreover,
\begin{equation}\label{eq:RTR4-reflection}
   \bigl(\Dec_F(\cT),\SigmaHat,\DeltaHat\bigr)
   \text{ satisfies \emph{(RTR4)}}
  ~~\Longrightarrow~~
   (\cT,\Sigma,\Delta)
   \text{ satisfies \emph{(RTR4)}}.
\end{equation}
\end{proposition}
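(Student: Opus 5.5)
We verify (RTR1)--(RTR3) for $\DeltaHat$ one at a time, lifting data from $\cT$ via $Q$ and using the explicit form \eqref{eq:decorated-triangle}. The reflection \eqref{eq:RTR4-reflection} then follows by embedding triangles of $\cT$ through $J$.

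For (RTR1), $\DeltaHat$ is isomorphism closed by definition. The trivial sequence $0\to(A,U)\xrightarrow{1}(A,U)\to(0,0)$ has the form \eqref{eq:decorated-triangle}. Indeed, take the underlying right triangle $0\to A\xrightarrow{1_A}A\to0$ from (RTR1) in $\cT$, with decorations $0\subseteq F(0)$ and $U\subseteq F(A)$. The attached third subspace is $F(1_A)U=U$, and $\SigmaHat(0,0)=(0,0)$. For a morphism $f\colon(X,U)\to(Y,V)$, complete $f$ in $\cT$ to a right triangle $X\to Y\xrightarrow{g}Z\xrightarrow{h}\Sigma X$. Since $F(f)U\subseteq V$, the sequence \eqref{eq:decorated-triangle} is a completion in $\DeltaHat$.

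For (RTR2), by isomorphism invariance of rotation it suffices to rotate a sequence of the form \eqref{eq:decorated-triangle}. The rotation of $(X,U)\to(Y,V)\to(Z,F(g)V)\to(\Sigma X,0)$ is
\[
(Y,V)\xrightarrow{g}(Z,F(g)V)\xrightarrow{h}(\Sigma X,0)\xrightarrow{-\Sigma f}(\Sigma Y,0).
\]
This is exactly the sequence attached to the rotated triangle $Y\to Z\to\Sigma X\to\Sigma Y$ in $\Delta$ (which exists by (RTR2)), with decorations $V$ and $F(g)V$. The required inclusion $F(g)V\subseteq F(g)V$ holds trivially. The third decoration is $F(h)F(g)V=0$ by \cref{lem:zero-compositions}, which matches $(\Sigma X,0)$.

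For (RTR3), again reduce to standard sequences, since conjugating by isomorphisms of sextuples preserves the problem. Given $\alpha,\beta$ preserving decorations with $\beta f=f'\alpha$, apply (RTR3) in $\cT$ to obtain $\gamma\colon Z\to Z'$. We must check that $\gamma$ is a morphism $(Z,F(g)V)\to(Z',F(g')V')$. This holds because $F(\gamma)F(g)V=F(g')F(\beta)V\subseteq F(g')V'$. The last square involves $\SigmaHat\alpha=\Sigma\alpha$ between zero-decorated objects, so it is automatic. This is where the specific choice ``third decoration $=F(g)V$'' is essential: it is the smallest admissible choice, so every map into it is controlled.

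For \eqref{eq:RTR4-reflection}, assume (RTR4) in $\Dec_F(\cT)$ and let three right triangles in $\cT$ be given as in (RTR4). Apply $J$ to them. Each image $JX\to JY\to JZ\to J\Sigma X=\SigmaHat JX$ is of the form \eqref{eq:decorated-triangle} with all decorations zero, since $F(g)0=0$. By \eqref{eq:compatibility}, the octahedral input data lie in $\DeltaHat$, so (RTR4) yields $s,t$ and a right triangle $JC\xrightarrow{s}JY\xrightarrow{t}JZ\xrightarrow{(\SigmaHat g)c}\SigmaHat JC$ in $\DeltaHat$. The key step is to show that $Q$ sends members of $\DeltaHat$ to members of $\Delta$. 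This holds because $Q$ of a standard sequence is its underlying triangle, and $Q$ carries isomorphisms of sextuples to isomorphisms, using $Q\SigmaHat=\Sigma Q$. Applying $Q$ therefore gives the commutative octahedral diagram and the required triangle in $\cT$, since $QJ=1$. I expect no serious obstacle. The only delicate point is bookkeeping under isomorphism closure in (RTR2) and (RTR3), namely checking that transport along an isomorphism of sextuples commutes with rotation and with filling.
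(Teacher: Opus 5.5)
Your proposal is correct and follows the paper's proof essentially step by step. You check (RTR1)--(RTR3) on the standard sequences \eqref{eq:decorated-triangle} and transport along isomorphisms of sextuples, with the same key inclusion $F(\gamma)F(g)V=F(g')F(\beta)V\subseteq F(g')V'$ for (RTR3), and you reflect (RTR4) by pushing the datum through $J$ and applying $Q$ to the completion, using that $Q$ sends $\DeltaHat$ into $\Delta$.
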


\begin{proof}
We first verify the axioms for the representatives in \eqref{eq:decorated-triangle}.  Because the axioms are invariant under isomorphism, the same conclusions then hold for their isomorphism closure.  In the verification of (RTR3), a square involving triangles in the isomorphism closure is transported to a square between representatives, completed there, and transported back.

For (RTR1), the trivial right triangle of $(X,U)$ is obtained from
\[
   0\longrightarrow X\xrightarrow{1_X}X\longrightarrow0
\]
by using the decorations $0$ and $U$ on the first two terms.  Since $F(1_X)U=U$, this gives
\[
   0\longrightarrow(X,U)\xrightarrow{1}(X,U)\longrightarrow0.
\]
Now let $f\colon(X,U)\to(Y,V)$ be any morphism in $\Dec_F(\cT)$.  Complete the underlying map $f\colon X\to Y$ to a right triangle \eqref{eq:underlying-triangle}.  The condition $F(f)U\subseteq V$ then makes \eqref{eq:decorated-triangle} a right triangle completing $f$.

For (RTR2), the forward rotation of \eqref{eq:decorated-triangle} has underlying right triangle
\[
   Y\xrightarrow{g}Z\xrightarrow{h}\Sigma X
   \xrightarrow{-\Sigma f}\Sigma Y.
\]
The first two decorations are $V$ and $F(g)V$, and the third decoration prescribed by \cref{def:DeltaHat} is
\[
   F(h)F(g)V=0.
\]
Hence the rotated decorated triangle is
\[
   (Y,V)\xrightarrow{g}(Z,F(g)V)
   \xrightarrow{h}(\Sigma X,0)
   \xrightarrow{-\Sigma f}(\Sigma Y,0),
\]
and is a right triangle.

For (RTR3), consider a commutative square between the first morphisms of two decorated right triangles of the form \eqref{eq:decorated-triangle}.  On the underlying right triangles, \emph{(RTR3)} in $\cT$ gives a filler $\gamma\colon Z\to Z'$ satisfying
\[
   \gamma g=g'\beta.
\]
Since $\beta\colon(Y,V)\to(Y',V')$ is a decorated morphism,
\[
   F(\beta)V\subseteq V'.
\]
Therefore
\[
   F(\gamma)(F(g)V)
      =F(g')F(\beta)V
      \subseteq F(g')V',
\]
so $\gamma$ is also a morphism in $\Dec_F(\cT)$.  This proves (RTR3).

Finally, assume that the decorated category satisfies (RTR4).  Start with an arbitrary (RTR4) datum in $\cT$ and apply $J$ to every object and morphism.  Because all decorations are zero, each of the three given right triangles is sent to a right triangle in $\Dec_F(\cT)$, and \eqref{eq:compatibility} identifies the suspended morphisms correctly.  An (RTR4) completion in $\Dec_F(\cT)$ therefore forgets under $Q$ to morphisms $s$ and $t$ satisfying the required commutative relations in $\cT$.  The additional row is a right triangle after applying $Q$: this is immediate for the representatives in \eqref{eq:decorated-triangle} and follows for their isomorphism closure because $\Delta$ is closed under isomorphisms.  Thus (RTR4) holds in $\cT$.
\end{proof}

\begin{remark}\label{rem:reflection-only}
Only reflection of \emph{(RTR4)} is asserted.  The proof does not claim that an octahedral completion in $\cT$ lifts to every decorated octahedral datum.
\end{remark}

\section{The example}\label{sec:example}

We now apply the transfer result to the seed in \cref{prop:seed}.  Let
\[
   F\colon\cP(\Lambda)\longrightarrow\operatorname{fdVect}_{\Ftwo}
\]
be the underlying-vector-space functor and set
\[
   \cR=\Dec_F\bigl(\cP(\Lambda)\bigr).
\]
Thus an object of $\cR$ is a finitely generated projective right $\Lambda$-module $P$ together with a chosen $\Ftwo$-subspace $U$ of its underlying vector space.  In this case, the decorated suspension takes the concrete form
\[
   \SigmaHat(P,U)=(P_\nu,0).
\]

\begin{theorem}\label{thm:main}
With the right triangles obtained from $\Delta_\varepsilon$ by \cref{def:DeltaHat}, the triple
\[
   (\cR,\SigmaHat,\DeltaHat)
\]
is a right pretriangulated category which is not right triangulated.  The endofunctor $\SigmaHat$ is faithful but not essentially surjective, and hence is not an autoequivalence.
\end{theorem}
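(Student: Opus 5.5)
The plan is to combine \cref{prop:seed} with \cref{prop:transfer} and then check the two properties of $\SigmaHat$ directly.

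First, by \cref{prop:seed}, the triple $(\cP(\Lambda),\Sigma,\Delta_\varepsilon)$ is right pretriangulated. The functor $F$ is additive with values in $\operatorname{fdVect}_{\Ftwo}$, since $\Lambda$ is $35$-dimensional, so finitely generated projectives are finite-dimensional. Hence \cref{prop:transfer} applies and shows that $(\cR,\SigmaHat,\DeltaHat)$ satisfies (RTR1)--(RTR3). For the failure of (RTR4), I argue by contraposition using \eqref{eq:RTR4-reflection}. If $\cR$ satisfied (RTR4), so would $(\cP(\Lambda),\Sigma,\Delta_\varepsilon)$, contradicting \cref{prop:seed}. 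So $\cR$ is right pretriangulated but not right triangulated.

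Faithfulness is immediate. On Hom sets, $\SigmaHat$ acts as $\Sigma$, restricted to the subgroup $\operatorname{Hom}_{\cR}((X,U),(Y,V))\subseteq\operatorname{Hom}_{\cP(\Lambda)}(X,Y)$. Since $\Sigma=(-)_\nu$ is an autoequivalence, and in fact acts as the identity on underlying linear maps, it is faithful. Therefore $\SigmaHat$ is faithful.

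Failure of essential surjectivity is the only point needing a small argument. Every object in the image of $\SigmaHat$ has the form $(P_\nu,0)$. Consider the object $(P_1,F(P_1))$, which carries the full decoration and has $F(P_1)\neq 0$. Suppose $\phi\colon(P_1,F(P_1))\to(Q,0)$ were an isomorphism in $\cR$ with inverse $\psi$. The morphism condition for $\phi$ gives $F(\phi)F(P_1)\subseteq 0$, so $F(\phi)=0$. Since $F$ is the forgetful functor, this forces $\phi=0$. Then $1_{P_1}=\psi\phi=0$, which is impossible because $P_1\neq0$. More generally, an object $(X,U)$ with $U\neq0$ is never isomorphic to an object with zero decoration, since $F$ of an isomorphism is injective. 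Hence $\SigmaHat$ is not essentially surjective, and in particular it is not an autoequivalence. I expect no real obstacle: the substantive content is already in \cref{prop:seed} and \cref{prop:transfer}, and the only care needed is in stating the decoration invariant correctly.
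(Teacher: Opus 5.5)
Your proposal is correct and follows the paper's proof essentially step by step. Both obtain (RTR1)--(RTR3) from \cref{prop:transfer} and the failure of (RTR4) by contraposition of \eqref{eq:RTR4-reflection} against \cref{prop:seed}; both get faithfulness from that of $\Sigma$ on the underlying morphisms. For non-essential-surjectivity, the only cosmetic difference is that you take the fully decorated object $(P_1,F(P_1))$, whereas the paper takes the line $\Ftwo e_1$; the argument that an isomorphism cannot carry a nonzero decoration into a zero one is the same.
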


\begin{proof}
By \cref{prop:transfer}, the triple $(\cR,\SigmaHat,\DeltaHat)$ satisfies (RTR1)--(RTR3).  If it satisfied \emph{(RTR4)}, then \eqref{eq:RTR4-reflection} would imply (RTR4) for $(\cP(\Lambda),\Sigma,\Delta_\varepsilon)$, contrary to \cref{prop:seed}.  Thus $\cR$ is right pretriangulated but not right triangulated.

Since $\Sigma=(-)_\nu$ is an autoequivalence, it is faithful.  If $f$ is a morphism in $\cR$ and $\SigmaHat(f)=0$, then
\[
   \Sigma(Qf)=Q(\SigmaHat f)=0.
\]
Faithfulness of $\Sigma$ gives $Qf=0$, hence $f=0$.  Thus $\SigmaHat$ is faithful.

To see that $\SigmaHat$ is not essentially surjective, let
\[
   P_1=e_1\Lambda
\]
and choose the nonzero subspace
\[
   U=\Ftwo e_1\subseteq F(P_1).
\]
Every object in the image of $\SigmaHat$ has zero decoration.  Suppose that $(P_1,U)$ were isomorphic to a zero-decorated object $(Y,0)$.  Such an isomorphism
\[
   a\colon(P_1,U)\xrightarrow{\sim}(Y,0)
\]
must satisfy $F(a)(U)=0$.  But the underlying map $a\colon P_1\to Y$ is an isomorphism, so $F(a)$ is an isomorphism of vector spaces.  This forces $U=0$, a contradiction.  Hence $(P_1,U)$ is not in the essential image of $\SigmaHat$.
\end{proof}

\begin{corollary}\label{cor:existence}
There exists a right pretriangulated category with nonzero, noninvertible suspension which is not right triangulated.
\end{corollary}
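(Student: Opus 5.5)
The corollary follows directly from \cref{thm:main}. I will take $\cR=\Dec_F(\cP(\Lambda))$ with the suspension $\SigmaHat$ and the class $\DeltaHat$. By \cref{thm:main}, this triple is right pretriangulated and not right triangulated. The theorem also shows that $\SigmaHat$ is not essentially surjective. So it only remains to check two things: that $\SigmaHat$ is nonzero, and that it is noninvertible in the relevant sense.

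For noninvertibility, I will argue as follows. If $\SigmaHat$ were an autoequivalence, or even just an automorphism of categories, then it would be essentially surjective. This contradicts \cref{thm:main}. Hence $\SigmaHat$ admits no quasi-inverse. I will phrase "noninvertible" as "not an autoequivalence", which is the strongest of the natural readings, so this covers the weaker ones too.

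For nonzeroness, I will use faithfulness. \cref{thm:main} states that $\SigmaHat$ is faithful, so it suffices to exhibit a nonzero morphism in $\cR$. The identity of $(P_1,0)$ is one: $P_1=e_1\Lambda$ is nonzero because it contains $e_1$. Faithfulness then gives $\SigmaHat(1_{(P_1,0)})\neq0$. Concretely, $\SigmaHat(P_1,0)=((P_1)_\nu,0)$, which is a nonzero object since twisting preserves the underlying vector space. This shows $\SigmaHat$ is not the zero functor.

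There is essentially no obstacle here. The only point needing care is to make the word "noninvertible" precise, and the paragraph on noninvertibility settles that. Everything else is a direct citation of \cref{thm:main}.
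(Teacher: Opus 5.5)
Your proposal is correct and follows the paper's proof. The paper cites \cref{thm:main} and observes that $\SigmaHat$ is nonzero because its underlying functor $(-)_\nu$ is an autoequivalence of the nonzero category $\cP(\Lambda)$; your faithfulness argument applied to $1_{(P_1,0)}$ makes the same point explicit. One cosmetic remark: an automorphism of categories is a special case of an autoequivalence, so ``or even just an automorphism'' states the comparison backwards, though the logic is unaffected.
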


\begin{proof}
Apply \cref{thm:main}.  The suspension is nonzero because its underlying functor is the autoequivalence $(-)_\nu$ on the nonzero category $\cP(\Lambda)$.
\end{proof}

\begin{remark}\label{rem:scope}
The failure of \emph{(RTR4)} in \cref{thm:main} is inherited from the type $A_5$ construction of Chen--Liu--Lu--Zhang.  The decoration construction is not a second independent source of a non-octahedral pretriangulation.  Its role is to convert the known example with invertible suspension into a right pretriangulated example with noninvertible suspension while retaining the same obstruction to the right octahedral axiom.
\end{remark}

\vspace{3mm}

\hspace{-5.5mm}\textbf{Data Availability}\hspace{2mm} Data sharing not applicable to this article as no datasets were generated or analysed during
the current study.
\vspace{3mm}

\hspace{-5.5mm}\textbf{Conflict of Interests}\hspace{2mm} The authors declare that they have no conflicts of interest to this work.

\vspace{3mm}

%\hspace{-5.5mm}{\bf Declaration of generative AI and AI-assisted technologies in the manuscript preparation process:}
%During the preparation of this work, the authors used ChatGPT-5.6Sol for language polishing and proofreading. The authors reviewed and edited the output as needed and take full responsibility for the content of the published article.

\hspace{-5mm}\textbf{Jing He}\\
School of Mathematics and Statistics, Hunan University of Technology and Business, 410205 Changsha, Hunan P. R. China\\
E-mail: jinghe1003@163.com\\[0.4cm]
\textbf{Panyue Zhou}\\
School of Mathematics and Statistics, Changsha University of Science and Technology, 410114 Changsha, Hunan, P. R. China\\
E-mail: panyuezhou@163.com

\end{document}